\begin{document}
\title{VALUATION ON A FILTERED MODULE AND RELATIONS}
\author{M.H. Anjom SHoa\footnote{Mohammad Hassan Anjom SHoa, University of Biirjand, anjomshoamh@birjand.ac.ir, anjomshoamh@yahoo.com}, M.H. Hosseini\footnote{Mohammad Hossein Hosseini, University of Birjand, mhhosseini@birjand.ac.ir}}
\date{}
\maketitle
\newtheorem{Def}{Definition}[section]
\newtheorem{lem}{Lemma}[section]
\newtheorem{theo}{Theorem}[section]
\newtheorem{pro}{Proposition}[section]
\newtheorem{cro}{Corollary}[section]
\newtheorem{rem}{Remark}[section]
\begin{abstract}
In this paper we show if $R$ is a filtered ring and $M$ a
filtered $R$\_module then we can define a valuation on a module
for $M$. Then we show that we can find an skeleton of valuation on
$M$, and we prove some properties such that derived form it for a
filtered module.
\end{abstract}
\paragraph*{Key Words:}Filtered module, Filtered ring, Valuation on module, skeleton of valuation.

\section{Introduction}
 \paragraph*{}In algebra valuation module and filtered $R$\_module are two most important structures. We know that filtered $R$\_module is  the most important structure since filtered module is a base for graded module especially associated graded module and completion and some similar results(\cite{1},\cite{2},\cite{3},\cite{7},\cite{8}). So, as these important structures, the relation between these structure is useful for finding some new structures, and if $M$ is a valuation module then $M$ has many properties that have many usage for example,  Rees valuations and asymptotic primes of rational powers in Noetherian rings and lattices(\cite{4},\cite{10}).
 \paragraph*{} In this article we investigate the relation between filtered $R$\_module and valuation module. We prove that if we have filtered $R$\_module then we can find a valuation $R$\_module on it. For this we define $\nu :M\to\mathbb{Z}$ such that  for every $t \in M$, and by lemma(\ref{sec:lemv31}), lemma(\ref{sec:lemv32}), lemma(\ref{sec:lemv33}), lemma(\ref{sec:lemv34}) and theorem(\ref{sec:thv1}) we show $\nu$ has all properties of valuation on $R$-module $M$. Also we show if $M$ is a filtered $R$\_module then it has a skeleton of valuation, continuously we prove some properties for $M$ that derived from skeleton of valuation(\cite{6},\cite{9}).

\section{Preliminaries}
\begin{Def}\label{sec:defv1}
  A filtered ring $R$ is a ring together with a family $\left\{R_{n} \right\}_{n\ge 0} $ of  additive subgroups of $R$ satisfying in the following
  conditions:
  \begin{enumerate}
  \item[i)] $R_{0} =R$;
  \item[ii)]$R_{n+1} \subseteq R_{n} $ for all $n\ge 0$;
  \item[iii)]$R_{n} R_{m} \subseteq R_{n+m} $ for all  $n,m\ge 0$.
    \end{enumerate}
 \end{Def}

 \begin{Def}\label{sec:defv2}
 Let $R$ be a ring together with a family $\left\{R_{n} \right\}_{n\ge 0} $ of  additive subgroups of $R$ satisfying the following conditions:
 \begin{enumerate}

\item[i)]$R_{0} =R$;
\item[ii)] $R_{n+1} \subseteq R_{n} $ for all $n\ge 0$;
 \item[iii)]$R_{n} R_{m} =R_{n+m} $  for all  $n,m\ge 0$,

Then we say  $R$ has a strong filtration.
 \end{enumerate}
\end{Def}
 \begin{Def}\label{sec:defv3}
 Let $R$ be a filtered ring with filtration $\left\{R_{n} \right\}_{n\ge 0} $ and $M$ be a $R$\_module with family $\left\{M_{n} \right\}_{n\ge 0} $ of subgroups of $M$ satisfying the following
 conditions:
 \begin{enumerate}
 \item[i)]$M_{0} =M$;
  \item[ii)] $M_{n+1} \subseteq M_{n} $ for all $n\ge 0$;
  \item[iii)]$R_{n} M_{m} \subseteq M_{n+m} $  for all  $n,m\ge 0$,
   \end{enumerate}
  Then $M$ is called filtered  $R$\_module.
 \end{Def}
 \begin{Def}\label{sec:dev4}
 Let $R$ be a filtered ring with filtration $\left\{R_{n} \right\}_{n\ge 0} $ and $M$ be a $R$\_module together with a family $\left\{M_{n} \right\}_{n\ge 0} $ of  subgroups of $M$ satisfying the following conditions:
  \begin{enumerate}

 \item[i)]$M_{0} =R$;
 \item[ii)] $M_{n+1} \subseteq M_{n} $ for all $n\ge 0$;
 \item[iii)]$R_{n} M_{m} =M_{n+m} $  for all  $n,m\ge 0$,
\end{enumerate}
Then we say  $M$ has a strong filtration.
 \end{Def}
 \begin{Def}\label{sec:defv5}
 Let $M$ be an $R$\_module where $R$ is a ring, and $\Delta$ an ordered set with
maximum element $\infty$ and $\Delta\neq\{\infty\}$. A mapping
$\upsilon$ of $M$ onto $\Delta$ is called a \textsl{valuation} on
$M$, if the following conditions are satisfied:
\begin{enumerate}
 \item[i)] For any $x,y\in M$, $\upsilon(x+y)\geq min\{\upsilon(x),\upsilon(y)\}$;
\item[ii)] If $\upsilon(x)\leq \upsilon(y)$, $x,y \in M$, then $\upsilon(ax)\leq
\upsilon(ay)$ for all $a\in R$;
\item[iii)] Put $\upsilon^{-1}(\infty):=\{x \in M | \upsilon(x)=\infty\}$.
If $\upsilon(az)\leq \upsilon(bz)$, where $a,b\in R$, and $z\in
M\setminus\upsilon^{-1}(\infty)$, then $\upsilon(ax)\leq
\upsilon(bx)$ for all $x\in M$
\item[iv)] For every $a\in R \setminus(\upsilon^{-1}(\infty):M)$, there
is an $a^{'}\in R$ such that \\
$\upsilon((a^{'}a)x)=\upsilon(x)$ for all $x\in M$
\end{enumerate}
 \end{Def}
  \begin{Def}\label{sec:defv6}
   Let $M$ be an $R$\_module where $R$ is a ring, and let $\nu$ be a valuation on
$M$. A representation system of the equivalence relation $\sim_{\nu}$ is called a skeleton of $\nu$.
 \end{Def}
 \begin{Def}\label{sec:defv7}
 A subset $S$ of $M$ is said to be $\nu$-independent if $S\cap \nu^{-1}(\infty)=\phi$, and $\nu(x)\notin \nu(Ry)$ for any pair of distinct elements $x,y\in S$. Here, we adopt the convention that the empty
subset $\phi$ is $\nu$-independent.
 \end{Def}
 \begin{pro} \label{sec:prov1}
 Let $M$ be an $R$\_module where $R$ is a ring, and let \\ $\upsilon:M \rightarrow \Delta$ be a valuation on $M$. Then the following statements are true:
\begin{enumerate}
 \item[i)] If $\nu(x)=\nu(y)$ for $x,y\in M$, then $\nu(ax)=\nu(ay)$ for all $a\in R$;
 \item[ii)] $\nu(-x)=\nu(x)$ for all $x\in M$;
 \item[iii)] If $\nu(x)\neq \nu(y)$, then $\nu(x+y)=min \lbrace \nu(x),\nu(y) \rbrace$;
 \item[iv)] If $\nu(az)=\nu(bz)$ for some $a,b\in R$ and $z\in M\setminus \nu^{-1}(\infty)$,then\\ $\nu(ax)=\nu(bx)$ for all $x\in M$;
 \item[v)] If $\nu(az)<\nu(bz)$ for some $a,b\in R$ and $z\in M$, then  $\nu(ax)<\nu(bx)$ for all $x\in M\setminus \nu^{-1}(\infty)$;
 \item[vi)] The core $\nu^{-1}$ of $\nu$ is prime submodule of $M$;
 \item[vii)] The following subsets constitute a valuation pair of $R$ with core \\
 $(M:\nu^{-1}(\infty))$:
 \begin{equation*}
 A_\nu=\lbrace a\in A \vert \nu(ax)\geq \nu(x) ~ for ~ all ~ x\in M
 \rbrace ,
 \end{equation*}
 \begin{equation*}
 P_\nu=\lbrace a\in A \vert \nu(ax)\geq \nu(x)
  ~ for ~ all ~  x\in M\setminus \nu^{-1}(\infty)\rbrace
 \end{equation*}
 \end{enumerate}
 \end{pro}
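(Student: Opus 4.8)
The plan is to prove the seven assertions in the order (i), (ii), (iii), (iv), then an auxiliary fact, then (v), (vi), (vii), since each later item uses earlier ones. Throughout I would use that $\Delta$ is \emph{totally} ordered: this is forced by condition (i) of Definition~\ref{sec:defv5}, since $\min\{\upsilon(x),\upsilon(y)\}$ must make sense for every pair and $\upsilon$ is onto $\Delta$. For (i), apply condition (ii) of Definition~\ref{sec:defv5} in both directions: $\nu(x)\le\nu(y)$ gives $\nu(ax)\le\nu(ay)$, and $\nu(y)\le\nu(x)$ gives the reverse inequality, hence equality. For (ii), split into the cases $\nu(x)\le\nu(-x)$ and $\nu(-x)\le\nu(x)$; in each case feed the inequality into condition (ii) of Definition~\ref{sec:defv5} with scalar $a=-1$ and use $(-1)(-x)=x$ to obtain the opposite inequality. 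A consequence I would record here is $\nu(0)=\infty$: from $0=x+(-x)$, subadditivity together with part (ii) gives $\nu(0)\ge\nu(x)$ for all $x$, and ontoness forces $\nu(0)=\infty$.

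For (iii) I would use the standard ultrametric argument: assuming $\nu(x)<\nu(y)$, subadditivity gives $\nu(x+y)\ge\nu(x)$, and if this were strict then writing $x=(x+y)+(-y)$ and applying subadditivity with part (ii) would give $\nu(x)\ge\min\{\nu(x+y),\nu(y)\}>\nu(x)$, a contradiction. Part (iv) is condition (iii) of Definition~\ref{sec:defv5} applied in both directions.

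Next I would show $\nu^{-1}(\infty)$ is an $R$-submodule: additive closure is immediate from subadditivity, and for $z\in\nu^{-1}(\infty)$ and $a\in R$, condition (ii) of Definition~\ref{sec:defv5} applied to the (trivially true) inequality $\nu(y)\le\nu(z)$ gives $\nu(ay)\le\nu(az)$ for every $y$, so taking $y=0$ with $\nu(0)=\infty$ yields $\nu(az)=\infty$. Then (v) follows: the hypothesis $\nu(az)<\nu(bz)$ excludes $z\in\nu^{-1}(\infty)$ (otherwise both values would be $\infty$), so condition (iii) of Definition~\ref{sec:defv5} gives $\nu(ax)\le\nu(bx)$ for all $x$; if equality held for some $x\notin\nu^{-1}(\infty)$, part (iv) would propagate it back to $z$, contradicting strictness. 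For (vi), $\nu^{-1}(\infty)$ is proper since $\Delta\neq\{\infty\}$ and $\nu$ is onto; to see it is prime, assume $ax\in\nu^{-1}(\infty)$ with $x\notin\nu^{-1}(\infty)$, apply condition (iii) of Definition~\ref{sec:defv5} with $z=x$ to the inequality $\nu(bx)\le\nu(ax)=\infty$ to obtain $\nu(by)\le\nu(ay)$ for all $b\in R$, $y\in M$, and then set $b=0$ to conclude $\nu(ay)=\infty$, i.e. $a\in(\nu^{-1}(\infty):M)$.

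For (vii) I would verify the axioms of a valuation pair directly. That $A_\nu$ is a subring containing $1$, and that $P_\nu$ is a two-sided ideal of $A_\nu$, follow from subadditivity and the estimate $\nu((ab)x)=\nu(a(bx))\ge\nu(bx)$, combined with the defining inequalities of $A_\nu$ and $P_\nu$ and, in the borderline case $\nu(bx)=\infty$, with the submodule property of $\nu^{-1}(\infty)$ established above. The substantive points are primeness of $P_\nu$ in $A_\nu$ and the maximality clause (for $a\in R\setminus A_\nu$ there is $a'\in P_\nu$ with $a'a\in A_\nu\setminus P_\nu$): here I would observe that $a\notin A_\nu$ forces $a\notin(\nu^{-1}(\infty):M)$, invoke condition (iv) of Definition~\ref{sec:defv5} to produce $a'$ with $\nu((a'a)x)=\nu(x)$ for all $x$, and then use part (v) to turn a single strict inequality into an ``all $x$'' statement, which is exactly membership in $P_\nu$. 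Identifying the core with $(\nu^{-1}(\infty):M)$ is then a matter of unwinding the definitions and using the submodule property already shown. The main obstacle I expect is (vii): keeping the two-sided bookkeeping consistent and correctly treating the elements with $\nu=\infty$ when checking the ideal and valuation-pair conditions; the rest is a fairly routine unfolding of the four valuation axioms of Definition~\ref{sec:defv5} and the totality of $\Delta$.
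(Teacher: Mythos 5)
The paper offers no argument for this proposition at all---its entire proof is the single line ``see proposition 1.1 \cite{6}''---so your proposal is necessarily a different route: a self-contained reconstruction from the four axioms of Definition~\ref{sec:defv5}. For items (i)--(vi) your reconstruction is correct and is essentially the standard argument behind the cited result: deducing totality of the order on $\Delta$ from surjectivity, establishing $\nu(0)=\infty$ and the submodule property of $\nu^{-1}(\infty)$ as auxiliary facts, the ultrametric trick for (iii), and the two-directional applications of axioms (ii) and (iii) for (i), (iv) and (v) all go through. Two caveats. First, your proof of (ii) via the scalar $a=-1$ tacitly assumes $R$ has an identity acting as such on $M$; the Preliminaries say only ``ring'', so this hypothesis should be made explicit (it is the only place you use it). Second, and more substantively, (vii) is still a plan rather than a proof: the paper never defines ``valuation pair'', writes $a\in A$ for an undefined $A$ (it should be $R$), and defines $P_\nu$ by the same non-strict inequality as $A_\nu$, which would give $A_\nu\subseteq P_\nu$ and cannot be intended; you are implicitly working with the corrected definition from \cite{6}, namely $\nu(ax)>\nu(x)$ for all $x\notin\nu^{-1}(\infty)$. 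To complete (vii) you would need to state that definition and the Manis-type valuation-pair axioms explicitly and then carry out the verifications you outline, though the key steps you identify (axiom (iv) producing $a'$ with $a'a\in A_\nu\setminus P_\nu$, and part (v) upgrading a single strict inequality to all of $M\setminus\nu^{-1}(\infty)$) are the right ones. What your route buys is a proposition independent of \cite{6}; what it costs is the burden of supplying the definitions the paper omits.
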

 \begin{proof}
see proposition 1.1 \cite{6}
\end{proof}

 \begin{Def}\label{sec:defv8}
 The pair $(A_{\nu},P_{\nu})$ as in Proposition (\ref{sec:prov1}) is called the valuation pair
of $R$ induced by $\nu$ or the induced valuation pair of $\nu$.
\end{Def}

 \section{Valuation derived from filtered module}
\paragraph*{}In this section we use the four following lemmas for showing the existence of valuation on filtered module. Let $R$ be a ring with unit and $R$ a filtered ring with
filtration $\left\{R_{n} \right\}_{n>0} $ and $M$ be filtered
$R$\_module with filtration $\left\{M_{n} \right\}_{n>0} $.
 \begin{lem}\label{sec:lemv31}
 Let $M$ be filtered $R$\_module with filtration $\left\{M_{n} \right\}_{n>0} $. Now we define $\nu :M\to\mathbb{Z}$ such that  for every $t \in M$ and $\nu (t)=\min \left\{i\left|t \in M_{i} \backslash M_{i+1} \right. \right\}$. Then  for all $x,y \in M$ we have $\nu(x+y)\geq min\lbrace \nu(x),\nu(u) \rbrace$.
  \end{lem}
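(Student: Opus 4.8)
The plan is to reduce the inequality to the two structural facts that each $M_i$ is an additive subgroup of $M$ and that the filtration is nested, $M_{i+1}\subseteq M_i$. The crucial preliminary observation I would isolate first is the following reformulation of $\nu$: for every $t\in M$ and every integer $j$, one has $t\in M_j$ if and only if $\nu(t)\ge j$. Indeed, if $\nu(t)=k$ then by definition $t\in M_k$, and since the chain is decreasing $t\in M_j$ for all $j\le k$; conversely, if $t\in M_j$ but $\nu(t)=k<j$, then $k+1\le j$ forces $M_j\subseteq M_{k+1}$, so $t\in M_{k+1}$, contradicting $t\notin M_{k+1}$. (Here one tacitly extends the codomain to $\mathbb{Z}\cup\{\infty\}$ and sets $\nu(t)=\infty$ when $t\in\bigcap_i M_i$, so that $\min\varnothing$ is interpreted as $\infty$; the displayed equivalence then holds in that case as well.)

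With this in hand the main argument is short. I would set $m=\min\{\nu(x),\nu(y)\}$ and assume without loss of generality that $\nu(x)=m\le\nu(y)$. By the observation, $x\in M_m$; and since $\nu(y)\ge m$, also $y\in M_m$. Because $M_m$ is a subgroup of $M$, it follows that $x+y\in M_m$, and applying the observation once more yields $\nu(x+y)\ge m=\min\{\nu(x),\nu(y)\}$, which is precisely the claim.

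The only point requiring care — and the only thing I would call an obstacle, though a mild one — is the bookkeeping at the edge of the range of $\nu$: one must confirm the argument still runs when $\nu(x)$ or $\nu(y)$ equals $\infty$, i.e. when $x$ or $y$ lies in $\bigcap_i M_i$, and when $x+y$ lies in $\bigcap_i M_i$. In the former situation $\min\{\nu(x),\nu(y)\}$ is the finite one of the two values, say $\nu(y)$, and $x$ still lies in $M_{\nu(y)}$, so the subgroup step is unaffected; in the latter situation $\nu(x+y)=\infty$ exceeds every lower bound automatically. Once these degenerate cases are dispatched, nothing further is needed.
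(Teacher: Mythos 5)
Your proof is correct and takes essentially the same route as the paper's: both arguments reduce the inequality to the two facts that the filtration is nested and that each $M_i$ is an additive subgroup, the paper phrasing the key step as a contradiction (if $k<i$ then $x+y\in M_i\subseteq M_{k+1}$, contradicting $x+y\notin M_{k+1}$) where you phrase it directly via the equivalence $t\in M_j\iff\nu(t)\ge j$. Your explicit handling of the $\infty$ case (elements of $\bigcap_i M_i$, where $\min\varnothing$ must be read as $\infty$) is a point the paper silently glosses over, but it does not change the approach.
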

  \begin{proof}
 For any $x,y \in M$ such that $\nu (x )=i$ also $\nu (y )=j$, and $\nu(x+y )=k$, so we have $x+y \in M_{k} \backslash M_{k+1} $. Without losing the generality, let $i<j$ so $M_{j} \subset M_{i} $ hence $y \in R_{i} $. Now if $k<i$, then $k+1\le i$ and $M_{i} \subset M_{k+1} $ so $x+y \in M_{i} \subset M_{k+1} $ it is contradiction. Hence $k\ge i$ and so we have $\nu(x+y )\geq \min \left\{\nu(x),\nu(y)\right\}$.\\
 \end{proof}

 \begin{lem}\label{sec:lemv32}
 Let $M$ be filtered $R$\_module with filtration $\left\{M_{n} \right\}_{n>0} $. Now we define $\nu$ as lemma(\ref{sec:lemv31}). If $\upsilon(y)\leq \upsilon(x)$, $x,y \in M$, then $\upsilon(ay)\leq \upsilon(ax)$ for all $a\in R$;
 \end{lem}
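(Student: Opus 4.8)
The plan is to argue by contradiction, in the style of the proof of Lemma~\ref{sec:lemv31}. Fix $a\in R$ and $x,y\in M$ with $\nu(y)\le\nu(x)$; put $j:=\nu(y)$ and $i:=\nu(x)$, so $j\le i$ and, since the filtration is decreasing, $x\in M_i\subseteq M_j$. Thus $x,y\in M_j$. Suppose toward a contradiction that $\nu(ay)>\nu(ax)$ and set $q:=\nu(ax)$, so that $ax\notin M_{q+1}$ while $ay\in M_{q+1}$ (the latter because $\nu(ay)\ge q+1$ forces membership in $M_{q+1}$ by the decreasing property). The goal is to deduce $ax\in M_{q+1}$, which contradicts $\nu(ax)=q$.

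Before attacking the crux I would collect the tools the hypotheses afford. Since each $M_n$ is an additive subgroup, $\nu(-z)=\nu(z)$ for all $z$; hence by Lemma~\ref{sec:lemv31}, $\nu(x-y)\ge\min\{\nu(x),\nu(-y)\}=j$, so $x-y\in M_j$. From axiom (iii) of Definition~\ref{sec:defv3} together with $a\in R_0$ we get $ay\in M_n$ whenever $y\in M_n$, and more precisely $ay\in M_{n+k}$ if moreover $a\in R_k$; applying this to $x$ and to $y$ yields the lower bounds $\nu(ax)\ge k+\nu(x)$ and $\nu(ay)\ge k+\nu(y)$ whenever $a\in R_k$.

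The heart of the matter is to convert $\nu(y)\le\nu(x)$ into a comparison between $ax$ and $ay$. The natural device is the identity $ax=ay+a(x-y)$: we already have $ay\in M_{q+1}$, so it suffices to show $a(x-y)\in M_{q+1}$, since then $ax\in M_{q+1}$ and we are done. To locate $a(x-y)$ one would combine the position of $x-y$ in the filtration with the compatibility $R_kM_n\subseteq M_{n+k}$, choosing the index so that the order of $a$ plus that index reaches $q+1$; it is here that the inequality $\nu(x)\ge\nu(y)$ must enter, through the principle that the correction $x-y$ needed to pass from $y$ to $x$ sits at least as deep in the filtration as $y$ itself, and that multiplication by $a$ should shift it at least as far as it shifts $y$.

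The step I expect to be the real obstacle is exactly this last one. The module axioms only provide \emph{lower} bounds on the order of a product, so $\nu(y)\le\nu(x)$ does not by itself bound $\nu(ay)$ above by $\nu(ax)$: a priori multiplication by $a$ could deepen $y$ by more than it deepens $x$. Making the argument go through therefore appears to require more than the bare containments of Definition~\ref{sec:defv3} — for instance some exactness of the filtration of the kind built into the strong-filtration condition of Definition~\ref{sec:dev4}, or a careful simultaneous bookkeeping of the indices $\nu(x),\nu(y),\nu(ax),\nu(ay)$ and the order of $a$ — and this is where I would concentrate the effort.
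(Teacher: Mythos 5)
Your proposal does not reach a proof, and you say so yourself; but the obstruction you isolate is exactly the right one, and it is worth being explicit that it is fatal. The filtration axiom $R_kM_n\subseteq M_{n+k}$ only yields \emph{lower} bounds on the value of a product, so nothing in $\nu(y)\le\nu(x)$ forces $\nu(ay)\le\nu(ax)$. Indeed the statement is false as it stands: take $R=k[T]$ with $R_0=R$ and $R_n=0$ for $n\ge1$, and $M=ke_1\oplus ke_2\oplus ke_3$ with $Te_1=e_3$, $Te_2=e_2$, $Te_3=e_3$, filtered by $M_0=M$, $M_1=ke_2\oplus ke_3$, $M_2=ke_3$, $M_n=0$ for $n\ge3$; each $M_n$ is $T$-stable, so this satisfies Definition~\ref{sec:defv3}. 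Then $\nu(e_1)=0\le 1=\nu(e_2)$, yet $\nu(Te_1)=\nu(e_3)=2>1=\nu(e_2)=\nu(Te_2)$. Consequently neither your decomposition $ax=ay+a(x-y)$ nor any other manipulation of the bare containments can close the gap; some additional hypothesis (for instance the strong filtration of Definition~\ref{sec:dev4}, or an exactness statement of the form $\nu(az)=\nu(a)+\nu(z)$) is genuinely required.

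The paper's own proof commits precisely the error you warn against. It picks $k$ with $a\in R_k$, notes $ax\in R_kM_i\subseteq M_{k+i}$ and $ay\in R_kM_j\subseteq M_{k+j}$, and then compares $k+i$ with $k+j$ as though these were the values $\nu(ax)$ and $\nu(ay)$; but membership in $M_{k+i}$ only gives $\nu(ax)\ge k+i$, and two lower bounds in the right order say nothing about the order of the actual values (in the example above $k=0$, $i=1$, $j=0$, and still $\nu(ay)=2>1=\nu(ax)$). So your attempt, while incomplete, is more careful than the paper's argument: you correctly refuse to pass from $ax\in M_{k+i}$ to $\nu(ax)=k+i$. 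The lemma can only be salvaged by adding a hypothesis under which multiplication by $a$ shifts the filtration degree by an exact, element-independent amount.
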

 \begin{proof}
 Let $\nu(x)=i$ and $\nu(y)=j$, since $\nu(x)\geq \nu(y)$ then $M_{j}\supseteq M_{i}$. Since $R$ is filtered ring, there exists $k\in \mathbb{Z}$ such that $a\in R_{k}$ so
 \begin{equation*}
 ax\in R_{k}M_{i}\subseteq M_{k+i}
  \end{equation*}
 \begin{equation*}
 ay\in R_{k}M_{j}\subseteq M_{k+j}
  \end{equation*}
 we have $i+k \geq j+k$ by $i\geq j$, then $\nu(ax)\geq \nu(ay)$ for all $a\in
 R$.
 \end{proof}
  \begin{lem}\label{sec:lemv33}
 Let $M$ be filtered $R$\_module with filtration $\left\{M_{n} \right\}_{n>0} $. Now we define $\nu$ as lemma(\ref{sec:lemv31}). Put $\upsilon^{-1}:=\{x \in M | \upsilon(x)=\infty\}$.
If $\upsilon(az)\leq \upsilon(bz)$, where $a,b\in R$, and $z\in
M\setminus\upsilon^{-1}(\infty)$, then $\upsilon(ax)\leq
\upsilon(bx)$ for all $x\in M$.
 \end{lem}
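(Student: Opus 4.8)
The statement to be proved is exactly axiom~(iii) of Definition~\ref{sec:defv5} for the map $\nu$ of Lemma~\ref{sec:lemv31}, so the plan is to verify it by the same index bookkeeping that was used in Lemma~\ref{sec:lemv32}. For $a\in R$ put
\begin{equation*}
w(a)=\min\{\,i\mid a\in R_{i}\setminus R_{i+1}\,\},
\end{equation*}
with the convention $w(a)=\infty$ when $a\in\bigcap_{i}R_{i}$. Since $R_{w(a)}M_{m}\subseteq M_{w(a)+m}$, one has at once the basic estimate $\nu(ax)\ge w(a)+\nu(x)$ for every $x\in M$. If $\nu(x)=\infty$ then $x\in M_{k}$ for all $k$, so $ax\in R_{0}M_{k}\subseteq M_{k}$ for all $k$ and hence $\nu(ax)=\infty=\nu(bx)$, making the conclusion trivial; so I may assume $\nu(x)<\infty$. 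Write also $n=\nu(z)$, which is a finite integer because $z\notin\nu^{-1}(\infty)$.

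The heart of the matter is the claim that, for an element of finite valuation, the valuation of a multiple is controlled exactly by the ring index:
\begin{equation*}
\nu(ax)=w(a)+\nu(x)\qquad\text{whenever }\nu(x)<\infty .
\end{equation*}
Granting this, the hypothesis $\nu(az)\le\nu(bz)$ becomes $w(a)+n\le w(b)+n$; since $n$ is a finite integer this forces $w(a)\le w(b)$ (if $w(a)=\infty$ the inequality already forces $w(b)=\infty$), and therefore
\begin{equation*}
\nu(ax)=w(a)+\nu(x)\ \le\ w(b)+\nu(x)=\nu(bx),
\end{equation*}
which is the desired conclusion.

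Thus the whole difficulty is concentrated in the displayed equality, and really only in its upper half: the bound $\nu(ax)\ge w(a)+\nu(x)$ is immediate from $R_{w(a)}M_{\nu(x)}\subseteq M_{w(a)+\nu(x)}$, but the reverse bound $\nu(ax)\le w(a)+\nu(x)$ --- the assertion that multiplication by $a$ moves an element no more than $w(a)$ steps down the filtration --- need not follow from the axioms of a filtered module alone, and one should expect to need an extra condition on $\{M_{n}\}$ (for instance $M_{n}=R_{n}M$, or some ``good filtration'' hypothesis). Establishing this upper bound, or finding a substitute argument that still yields axiom~(iii), is where the real work lies, and it is the step I expect to be the main obstacle. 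One might instead hope to use Lemma~\ref{sec:lemv32}, which shows $\nu(ax)$ depends only on $\nu(x)$, say $\nu(ax)=f_{a}(\nu(x))$ with $f_{a}$ nondecreasing and likewise $\nu(bx)=f_{b}(\nu(x))$; but the hypothesis gives only $f_{a}(n)\le f_{b}(n)$ at the single value $n$, which does not by itself force $f_{a}\le f_{b}$ everywhere, so one is driven back to the shift behaviour $f_{a}(m)=w(a)+m$ encoded in the displayed equality.
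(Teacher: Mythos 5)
You have not given a proof: you reduce the statement to the equality $\nu(ax)=w(a)+\nu(x)$ for $\nu(x)<\infty$ and then state, correctly, that its upper half $\nu(ax)\le w(a)+\nu(x)$ does not follow from the axioms of a filtered module. That is a genuine gap, and it cannot be closed without additional hypotheses on the filtrations: the lemma itself fails in general. For instance, take $R=k[s,t]$ with $R_{n}=(s,t)^{n}$ and $M=R\oplus R$ with $M_{n}=(s,t)^{n}\oplus J_{n}$, where $J_{n}$ is the (additive subgroup underlying the) monomial ideal generated by the $s^{p}t^{q}$ with $p+2q\ge n$; one checks $R_{n}M_{m}\subseteq M_{n+m}$. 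With $a=t$, $b=s$ and $z=(1,0)$ one gets $\nu(az)=\nu(bz)=1$, so the hypothesis $\nu(az)\le\nu(bz)$ holds with $z\notin\nu^{-1}(\infty)$, yet for $x=(0,1)$ one finds $\nu(ax)=\nu((0,t))=2>1=\nu((0,s))=\nu(bx)$. So no argument from the stated hypotheses alone can succeed, and your fallback via Lemma~\ref{sec:lemv32} fares no better (that lemma's monotonicity claim breaks on the same example, with $x=(1,0)$, $y=(0,1)$, $a=t$).

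For comparison, the paper's own proof makes exactly the leap you declined to make. It chooses $i,j,k$ with $a\in R_{i}$, $b\in R_{j}$, $z\in M_{k}$, notes $az\in M_{i+k}$ and $bz\in M_{j+k}$, and then reads the hypothesis $\nu(az)\le\nu(bz)$ as the inequality $k+i\le k+j$ between these indices --- that is, it silently assumes $\nu(az)=i+k$ and $\nu(bz)=j+k$, which is precisely your missing upper bound; it then deduces $\nu(ax)\le\nu(bx)$ from $R_{j}\subseteq R_{i}$ by the same unproved exactness. So your diagnosis locates the real defect in the argument rather than filling it: to make the lemma true one must impose something like $M_{n}=R_{n}M_{0}$ (a strong or ``good'' filtration), under which $w$ and $\nu$ become genuinely additive and your displayed equality, hence the whole argument, goes through.
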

 \begin{proof}
Since  $a,b\in R$ and $z\in M$ then there exist $i,j,k\in
\mathbb{Z}$ such that \\ $a\in R_{i}$ , $b\in R_{j}$ and $z\in
M_{k}$ hence
\begin{equation*}
az\in R_{i}M_{k}\subseteq M_{i+k}
\end{equation*}
\begin{equation*}
bz\in R_{j}M_{k}\subseteq M_{j+k}
\end{equation*}
Now if $\nu(az)\leq \nu(bz)$ then
\begin{equation*}
k+i\leq k+j \Longrightarrow i\leq j \Longrightarrow R_{j}\subseteq R_{i}
\end{equation*}
So we have $\nu(ax)\leq \nu(bx)$ for all $x\in M$
 \end{proof}
  \begin{lem}\label{sec:lemv34}
 Let $M$ be filtered $R$\_module with filtration $\left\{M_{n} \right\}_{n>0} $. Now we define $\nu$ as lemma(\ref{sec:lemv31}). For every $a\in R \setminus(\upsilon^{-1}(\infty):M)$, there
is an $a^{'}\in R$ such that $\upsilon((a^{'}a)x)=\upsilon(x)$
for all $x\in M$.
 \end{lem}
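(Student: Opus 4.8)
The plan is to first reduce the statement to a cleaner assertion about how $\nu$ behaves under multiplication by $a$, and then to dispatch that assertion using Proposition~\ref{sec:prov1}. Observe at the outset that, since $R$ is unital, $1 \in R = R_0$, so for every $x \in M$ one has $ax \in R_0 M_{\nu(x)} \subseteq M_{\nu(x)}$ by condition (iii) of Definition~\ref{sec:defv3}; hence $\nu(rx) \geq \nu(x)$ for every $r \in R$ and $x \in M$, and in particular $\nu\big((a'a)x\big) \geq \nu(ax) \geq \nu(x)$ no matter which $a' \in R$ we choose. It therefore suffices to produce an $a'$ with $\nu\big((a'a)x\big) \leq \nu(x)$ for all $x$, and the natural candidate is $a' = 1$. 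With this choice the lemma reduces to a single assertion: if $a \notin (\nu^{-1}(\infty):M)$ then $\nu(ax) = \nu(x)$ for all $x \in M$.

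To prove this I would argue by a dichotomy. By hypothesis there is $z \in M$ with $\nu(az) \neq \infty$, so $z \notin \nu^{-1}(\infty)$ and we may compare $\nu(az)$ with $\nu(1\cdot z) = \nu(z)$. If $\nu(az) = \nu(z)$, then Proposition~\ref{sec:prov1}(iv), applied to the pair $(a,1)$ and the non-core element $z$, gives $\nu(ax) = \nu(1\cdot x) = \nu(x)$ for all $x$, which is exactly what we want. The only other possibility is $\nu(z) < \nu(az)$, and then Proposition~\ref{sec:prov1}(v) forces $\nu(x) < \nu(ax)$ for every $x \in M \setminus \nu^{-1}(\infty)$; so the remaining task is to show that this contradicts $a \notin (\nu^{-1}(\infty):M)$, that is, to show that an element of $R$ which strictly raises the valuation of every non-core element must in fact send all of $M$ into $\nu^{-1}(\infty)$.

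I expect this last point to be the main obstacle, and it is where the filtration of $R$ itself has to enter. Writing $d = \sup\{n : a \in R_n\}$, the inclusions $ax \in R_n M_{\nu(x)} \subseteq M_{n+\nu(x)}$ give $\nu(ax) \geq d + \nu(x)$, so the problematic case is $d \geq 1$, and one would like to iterate the map $x \mapsto ax$ so that $a^k x$ climbs the filtration without bound, thereby forcing $ax$ into $\bigcap_n M_n = \nu^{-1}(\infty)$ for every $x$ and contradicting the existence of $z$. A single strict jump $\nu(ax) > \nu(x)$ does not by itself iterate to an unbounded one without extra structure tying $R$-multiplication to the module filtration, so this is the step I would examine most carefully; it appears to call for strengthening the hypotheses, for instance to strong filtrations in the sense of Definitions~\ref{sec:defv2} and~\ref{sec:dev4}, or to a separatedness assumption on $M$. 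In short, the reduction to ``$a' = 1$ and $\nu(ax) = \nu(x)$'' and the dichotomy via Proposition~\ref{sec:prov1} are routine, while all the genuine content lies in showing that a non-conductor element cannot strictly raise the valuation of every non-core element.
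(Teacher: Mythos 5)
Your opening reduction is sharper than you give it credit for: since $1\in R=R_0$ and $R_nM_m\subseteq M_{n+m}$, you get $\nu\big((a'a)x\big)\geq\nu(ax)\geq\nu(x)$ for \emph{every} choice of $a'$, so the required equality $\nu((a'a)x)=\nu(x)$ already forces $\nu(ax)=\nu(x)$, and the choice of $a'$ is genuinely irrelevant; the lemma is equivalent to the assertion that every $a\notin(\nu^{-1}(\infty):M)$ preserves $\nu$. Your dichotomy via Proposition~\ref{sec:prov1}(iv),(v) is also fine in outline (with the caveat that you are invoking consequences of the valuation axioms for a map not yet proved to be a valuation, so you must check that those two parts of Proposition~\ref{sec:prov1} rest only on axiom (iii), which Lemma~\ref{sec:lemv33} supplies). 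But the gap you flag at the end is real and, as stated, unfillable: the lemma is false. Take $R=M=\mathbb{Z}$ with $R_n=M_n=2^n\mathbb{Z}$, so that $\nu$ is the $2$-adic valuation and $\nu^{-1}(\infty)=\{0\}$, hence $(\nu^{-1}(\infty):M)=\{0\}$. For $a=2$ and any $a'\in R$ the product $a'a$ is even, so $\nu\big((a'a)\cdot 1\big)\geq 1>0=\nu(1)$. This is exactly your problematic case of a non-conductor element that strictly raises the valuation of every non-core element, and no iteration, separatedness assumption, or choice of $a'$ repairs it; some genuinely new hypothesis on the filtrations is required.

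For comparison, the paper's own proof takes a different route --- it fixes $x$ with $\nu(x)=i$, supposes $\nu((a'a)x)\neq\nu(x)$, and splits into the cases $\nu((a'a)x)<i$ and $\nu((a'a)x)>i$ --- but it closes the second case by an incorrect inference: from $(a'a)x\in M_{i+k+k'}\subseteq M_i$ it concludes ``$\nu((a'a)x)<i$,'' whereas membership in $M_i$ yields $\nu((a'a)x)\geq i$, the opposite inequality (indeed the opposite inequality is exactly your monotonicity observation). So the step you singled out as the main obstacle is precisely where the published argument breaks down, and your refusal to paper over it is the correct assessment; what is missing is not an idea you failed to find but a counterexample showing none exists under the stated hypotheses.
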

 \begin{proof}
 Let  $x\in \nu^{-1}(\infty)$ then for all $a^{'},a\in R$ $\upsilon((a^{'}a)x)=\upsilon(x)=\infty$.\\
  Now let $x \notin \nu^{-1}(\infty)$ and for all $a^{'}\in R$ we have $\nu((a^{'}a)x)\neq \nu(x)$. So
 if $a^{'}\in R\setminus(\nu^{-1}(\infty):M)$, then $a^{'}a\in R\setminus (\nu^{-1}(\infty):M)$ and hence $\nu((a^{'}a)x)\neq\infty$.\\
Let $a\in R_{k}$ , $a^{'}\in R_{k^{'}}$ and $x\in M_{i}$, then $a^{'}a\in R_{k+k^{'}}$ so $(a^{'}a)x\in M_{i+k+k^{'}}$.\\
We may have one of following conditions:
\begin{enumerate}
\item[1)] $\nu((a^{'}a)x)< \nu(x)$.
\item[2)] $\nu(x)<\nu((a^{'}a)x)$\\
\end{enumerate} Now if we have (1) then $i+k+k^{'} < i$, it is
contradiction . \\
Consequently $a^{'}\in R_{k^{'}}$ and $a\in R_{k}$ for $k\in
\mathbb{Z}$ then
\begin{equation*}
a^{'}a\in R_{k^{'}+k} \Longrightarrow (a^{'}a)x\in
R_{k+k^{'}}M_{i}\subseteq M_{i+k^{'}+k}.
\end{equation*}
Since $M_{k^{'}+k+i}\subseteq M_{i} $ hence $(a^{'}a)x\in M_{i}$.
So we have $\nu((a^{'}a)x)<i$ therefore $\nu(x)>\nu((a^{'}a)x)$,
it
is contradiction with (2). By now we have \\
$\nu(x)=\nu((a^{'}a)x)$.
 \end{proof}
 \begin{theo}\label{sec:thv1}
 Let $R$ be a filtered ring with filtration $\left\{R_{n} \right\}_{n>0} $, and $M$ be a filtered $R$\_module with filtration $\left\{M_{n} \right\}_{n>0} $. Now we define $\nu :M\to\mathbb{Z}$ such that  for every $t \in M$ and $\nu (t)=\min \left\{i\left|t \in M_{i} \backslash M_{h+1} \right. \right\}$. Then $\nu $ is a valuation on $M$.
 \end{theo}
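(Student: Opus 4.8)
The plan is to verify directly that $\nu$ satisfies the four axioms (i)--(iv) of Definition \ref{sec:defv5}, each of which has essentially been isolated as one of the four preceding lemmas. Before that, I would dispose of the set-theoretic set-up: the only thing to observe is that $\nu$ is well defined as a map of $M$ onto an ordered set with a top element $\infty$. Since $\{M_i\}$ is a descending chain, an element $t$ can lie in $M_i\setminus M_{i+1}$ for at most one index $i$; hence $\nu(t)$ is that unique index when it exists, and equals $\infty$ precisely when $t\in\bigcap_i M_i$. Taking $\Delta:=\nu(M)\subseteq\mathbb{Z}\cup\{\infty\}$ with the induced order, $\nu$ is onto $\Delta$ by construction, $\infty$ is its maximum, and $\Delta\neq\{\infty\}$ because $M_0=M$ forces $\nu$ to take a finite (indeed, $0$) value on every element of $M\setminus M_1$.

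With this in place, the four axioms follow from the lemmas one by one. Axiom (i), the ultrametric inequality $\nu(x+y)\geq\min\{\nu(x),\nu(y)\}$, is exactly Lemma \ref{sec:lemv31}. Axiom (ii), the monotonicity $\nu(x)\leq\nu(y)\Rightarrow\nu(ax)\leq\nu(ay)$ for all $a\in R$, is Lemma \ref{sec:lemv32} after interchanging the names of $x$ and $y$. Axiom (iii), that $\nu(az)\leq\nu(bz)$ for some $z\in M\setminus\nu^{-1}(\infty)$ implies $\nu(ax)\leq\nu(bx)$ for all $x\in M$, is Lemma \ref{sec:lemv33}. Axiom (iv), that every $a\in R\setminus(\nu^{-1}(\infty):M)$ admits an $a'\in R$ with $\nu((a'a)x)=\nu(x)$ for all $x\in M$, is Lemma \ref{sec:lemv34}. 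Assembling these four statements yields that $\nu$ is a valuation on $M$ in the sense of Definition \ref{sec:defv5}.

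The analytic content has therefore already been discharged in Lemmas \ref{sec:lemv31}--\ref{sec:lemv34}, so the body of the proof is essentially bookkeeping. The only point that genuinely needs care is the one handled in the first paragraph: confirming that $\Delta$ is a legitimate value set (in particular that it properly contains $\infty$ and is not reduced to it), and checking that the symbol $\nu^{-1}(\infty)$ appearing in the lemmas is read consistently as $\bigcap_i M_i$. Beyond that I do not anticipate any obstacle.
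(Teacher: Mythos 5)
Your proposal is correct and follows the same route as the paper: the proof there is precisely the assembly of Lemmas \ref{sec:lemv31}--\ref{sec:lemv34} into the four axioms of Definition \ref{sec:defv5}. Your additional preliminary paragraph checking that $\nu$ is well defined, that $\nu^{-1}(\infty)=\bigcap_i M_i$, and that the value set $\Delta$ is not reduced to $\{\infty\}$ is a point the paper passes over in silence, so it is a welcome (if minor) strengthening rather than a divergence.
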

   \begin{proof}
\begin{enumerate}
 \item[i)] By lemma (\ref{sec:lemv31}) we have For any $x,y\in M$, $\upsilon(x+y)\geq min\{\upsilon(x),\upsilon(y)\}$;
 \item[ii)] We have  If $\upsilon(x)\leq \upsilon(y)$, $x,y \in M$, then $\upsilon(ax)\leq
\upsilon(ay)$ for all $a\in R$ by lemma(\ref{sec:lemv32});
\item[iii)] Put $\upsilon^{-1}:=\{x \in M | \upsilon(x)=\infty\}$.
If $\upsilon(az)\leq \upsilon(bz)$, where $a,b\in R$, and $z\in
M\setminus\upsilon^{-1}(\infty)$, then then by lemma (\ref{sec:lemv33}) $\upsilon(ax)\leq
\upsilon(ay)$ for all $x\in M$;
\item[iv)] For every $a\in R \setminus(\upsilon^{-1}(\infty):M)$, then by lemma(\ref{sec:lemv34}) there
is an $a^{'}\in R$ such that
$\upsilon((a^{'}a)x)=\upsilon(x)$ for all $x\in M$.\\
\end{enumerate}
 So by definition(\ref{sec:defv5}) $\nu$ is a valuation on$M$ if has those conditions.
 \end{proof}

 \begin{cro}\label{sec:crov11}
 If $M$ be a filtered $R$\_module, then $\nu :M\to\mathbb{Z}$ has
 all of properties that explained in Proposition(\ref{sec:prov1}).
 \end{cro}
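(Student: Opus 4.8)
The plan is to deduce this corollary directly from Theorem \ref{sec:thv1} together with Proposition \ref{sec:prov1}, so that essentially no new work is needed beyond a small amount of bookkeeping. First I would make precise the codomain of $\nu$: extending $\mathbb{Z}$ by the symbol $\infty$ in the usual way (so that $\nu(t)=\infty$ exactly when $t\in\bigcap_{n}M_{n}$), the map $\nu$ takes values in the totally ordered set $\Delta:=\nu(M)\subseteq\mathbb{Z}\cup\{\infty\}$, whose maximum element is $\infty$; and provided the filtration is not the trivial one with $\bigcap_{n}M_{n}=M$, we also have $\Delta\neq\{\infty\}$. Thus $\Delta$ meets the requirements placed on the value set in Definition \ref{sec:defv5}.

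Next I would invoke Theorem \ref{sec:thv1}, which tells us that this $\nu$ satisfies conditions (i)--(iv) of Definition \ref{sec:defv5}, hence is a valuation on the $R$\_module $M$ in the exact sense of that definition. Since Proposition \ref{sec:prov1} is stated for an \emph{arbitrary} valuation $\upsilon:M\to\Delta$ on an arbitrary $R$\_module, I would simply specialize it to $\upsilon:=\nu$. This immediately delivers all seven statements (i)--(vii) of Proposition \ref{sec:prov1} for our $\nu$: that $\nu(ax)=\nu(ay)$ whenever $\nu(x)=\nu(y)$; that $\nu(-x)=\nu(x)$; that $\nu(x+y)=\min\{\nu(x),\nu(y)\}$ when $\nu(x)\neq\nu(y)$; the equality and strict-inequality analogues of the defining conditions; that the core $\nu^{-1}(\infty)$ is a prime submodule of $M$; and that $(A_{\nu},P_{\nu})$ is a valuation pair of $R$ with core $(\nu^{-1}(\infty):M)$.

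The only point that genuinely requires care --- and hence the main obstacle --- is verifying that the hypotheses under which Proposition \ref{sec:prov1} is valid are really satisfied by the $\nu$ arising from a filtration: that $\nu$ is honestly onto an ordered set with top element $\infty$, and that this set is not the singleton $\{\infty\}$. Surjectivity onto $\Delta=\nu(M)$ is automatic from the choice of $\Delta$; the nontriviality $\Delta\neq\{\infty\}$ holds as long as $\bigcap_{n}M_{n}\neq M$, which I would either assume outright or record as a standing hypothesis on the filtration. Once this is settled, the corollary is an immediate consequence of Theorem \ref{sec:thv1} and Proposition \ref{sec:prov1}, with no further computation required.
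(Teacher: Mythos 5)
Your proposal is correct and follows exactly the route the paper intends: the paper in fact states this corollary with no proof at all, treating it as the immediate combination of Theorem~\ref{sec:thv1} (which makes $\nu$ a valuation) with Proposition~\ref{sec:prov1} (which holds for any valuation). Your additional care about the value set --- that $\nu$ must land in $\mathbb{Z}\cup\{\infty\}$ rather than $\mathbb{Z}$, and that one needs $\bigcap_{n}M_{n}\neq M$ to guarantee $\Delta\neq\{\infty\}$ --- addresses a genuine gap the paper glosses over, and is the right thing to record as a standing hypothesis.
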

 \begin{pro}\label{sec:prov11}
 I $R$ is a strongly filtered ring and $M$ is a strongly filtered
 $R$\_module and there exist valuation $\nu :M\to\mathbb{Z}$ on $M$, then R should be a trivial filtered $R$\_module.
 \end{pro}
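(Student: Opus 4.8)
The plan is to argue by contradiction. Assume the filtration $\{R_n\}_{n\ge 0}$ of $R$ is nontrivial, so that $R_1\subsetneq R_0=R$, and aim for a contradiction with part (iv) of Definition~\ref{sec:defv5}, which $\nu$ satisfies by Lemma~\ref{sec:lemv34} (see also Theorem~\ref{sec:thv1}). Two consequences of the \emph{strong} filtrations drive the argument: from $R_nR_m=R_{n+m}$ (with $n=0$, $m=1$) we get $RR_1=R_1$ and, inductively, $R_n=R_1^{\,n}$ for $n\ge 1$; from $R_nM_m=M_{n+m}$ (with $n=1$, $m=i$) we get $R_1M_i=M_{i+1}$ for every $i\ge 0$, in particular $R_1M=R_1M_0=M_1$. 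I also record that, from the definition of $\nu$ in Lemma~\ref{sec:lemv31} together with $M_0=M$, one has $\nu^{-1}(\infty)=\bigcap_{n\ge 0}M_n$.

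The key computation is this. Fix $a\in R_1$ and let $x\in M$ have finite value, say $\nu(x)=i$, so $x\in M_i\setminus M_{i+1}$. For \emph{every} $a'\in R$ we have $a'a\in RR_1=R_1$, hence $(a'a)x\in R_1M_i=M_{i+1}$, and therefore $\nu\big((a'a)x\big)\ge i+1>i=\nu(x)$. Thus no $a'\in R$ can satisfy $\nu\big((a'a)x\big)=\nu(x)$, provided at least one $x$ of finite value exists. Such an $x$ does exist: indeed $M\neq M_1$, since $M=M_1$ would give $M=R_1M$ (because $M_1=R_1M_0=R_1M$ always), whence $M_n=M$ for all $n$ and $\nu\equiv\infty$, contradicting that $\nu$ is onto an ordered set $\Delta\neq\{\infty\}$; so any $x\in M\setminus M_1$ has $\nu(x)=0$.

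Now confront this with Definition~\ref{sec:defv5}(iv): if some $a\in R_1$ lay outside $(\nu^{-1}(\infty):M)$, axiom (iv) would supply an $a'\in R$ with $\nu\big((a'a)x\big)=\nu(x)$ for all $x\in M$, contradicting the previous paragraph. Hence $R_1\subseteq(\nu^{-1}(\infty):M)$, i.e.\ $M_1=R_1M\subseteq\bigcap_n M_n$, which forces $M_1=M_2=\cdots$. But then the only finite value attained by $\nu$ is $0$, since $M_i\setminus M_{i+1}=\emptyset$ for $i\ge 1$; using once more that the range of $\nu$ (being $\mathbb{Z}_{\ge 0}\cup\{\infty\}$, as the notation $\nu:M\to\mathbb{Z}$ intends) contains values $\ge 1$, we reach a contradiction. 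Therefore $R_1=R$, and then $R_n=R_1^{\,n}=R^n=R$ for every $n\ge 0$; i.e.\ the filtration of $R$ is trivial.

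The one genuinely delicate point, which I expect to be the main obstacle, is the degenerate alternative $M_1=M_2=\cdots$ appearing in the last step: there $\nu$ collapses to a two-valued map $M\to\{0,\infty\}$ and the argument stalls unless one knows that $\nu$ attains some finite value $\ge 1$. Making precise this use of the surjectivity of $\nu$ onto a sufficiently rich ordered set is the crux; the rest is a mechanical unwinding of the strong-filtration identities together with Definition~\ref{sec:defv5}(iv). Without that surjectivity hypothesis the conclusion genuinely fails (for instance $R=k[t]$ with $R_n=(t^n)$ and $M$ any $R$-module with $tM=t^2M$).
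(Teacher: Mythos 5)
Your argument follows the same essential line as the paper's own proof --- both hinge on axiom (iv) of Definition~\ref{sec:defv5} together with additivity of filtration degrees --- but you execute it more carefully. The paper writes, for $a\notin(\nu^{-1}(\infty):M)$ with degrees $\nu(a)=i$, $\nu(a')=j$, $\nu(x)=k$, that $i+j+k=k$, hence $i+j=0$, and then asserts ``consequently $R_i=R$ for every $i>0$.'' Your version of this step is the observation that $a\in R_1$ forces $(a'a)x\in M_{\nu(x)+1}$ for every $a'$, so that axiom (iv) pushes all of $R_1$ into the conductor $(\nu^{-1}(\infty):M)$. This is the same computation, phrased as the inequality $i+j\le 0$ rather than the equality $i+j=0$, and your inequality form is actually the correct one: even under a strong filtration the degree of a product can exceed the sum of the degrees, so the paper's $i+j+k=k$ is not literally justified, while your $\nu((a'a)x)\ge i+j+k$ is.

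The genuine issue is the final step, and you have located it exactly. From $R_1\subseteq(\nu^{-1}(\infty):M)$ one gets only $M_1=M_2=\cdots=\nu^{-1}(\infty)$, and at that point $\nu$ is a perfectly legitimate valuation onto the two-element set $\{0,\infty\}$, which Definition~\ref{sec:defv5} permits (the only requirement is $\Delta\neq\{\infty\}$). Your parenthetical example can be made fully concrete: take $R=k[t]$ with $R_n=(t^n)$, which is strongly and nontrivially filtered, and $M=k[t]/(t)$ with $M_0=M$ and $M_n=0$ for $n\ge 1$, which is a strongly filtered $R$-module; the map $\nu(x)=0$ for $x\neq 0$, $\nu(0)=\infty$ satisfies all four axioms. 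So the proposition as stated requires the additional hypothesis that $\nu$ attains some finite value $\ge 1$ (for instance, that it is onto $\mathbb{Z}_{\ge 0}\cup\{\infty\}$), and your proof closes precisely when that hypothesis is added. The paper's own proof passes over this case silently: its jump from $i+j=0$ to ``$R_i=R$ for every $i>0$'' fails exactly when $R_1$ lies inside the conductor. In short, your reasoning is at least as rigorous as the paper's, and the one gap you flag is a gap in the statement itself rather than in your argument; to match the paper you need only say explicitly that you are invoking an unstated richness assumption on the image of $\nu$, since nothing weaker will do.
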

 \begin{proof}
 By definition(\ref{sec:defv5})(iv) and theorem(\ref{sec:thv1}) we
 have for every $a\in R \setminus(\upsilon^{-1}(\infty):M)$, there
is an $a^{'}\in R$ such that $\upsilon((a^{'}a)x)=\upsilon(x)$.
Now if $\nu(a)=i$ , $\nu(a_{'})=j$ and $\nu(x)=k$ then $i+j+k=k$
so $i+j=0$, consequently $R_{i}=R$ for every $i>0$.
\end{proof}

\begin{pro}\label{sec:prov2}
 Let $M$ be an $R$\_module, where $R$ is a ring. Then there is a valuation
on $M$, if and only if there exists a prime ideal $P$ of $R$ such
that $PM_{P}\neq M_{P}$, where $M_{P}$ is the localization of $M$
at $P$.
\end{pro}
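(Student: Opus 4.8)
The plan is to prove the two implications separately: for the ``if'' direction I would exhibit an explicit filtered-module structure on $M$ and invoke Theorem~\ref{sec:thv1}, while for the ``only if'' direction I would take $P$ to be the core of the given valuation and derive a contradiction from axiom~(iv) of Definition~\ref{sec:defv5}.

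($\Leftarrow$) Suppose $P$ is a prime ideal of $R$ with $PM_P\neq M_P$. Since $M_P/PM_P=(M/PM)_P$ and a module with nonzero localization is nonzero, this forces $PM\neq M$. Now equip $R$ with its $P$-adic filtration, $R_0=R$ and $R_n=P^{n}$ for $n\ge 1$, which satisfies Definition~\ref{sec:defv1}, and equip $M$ with $M_0=M$, $M_n=P^{n}M$ for $n\ge 1$, which satisfies Definition~\ref{sec:defv3} because $R_nM_m=P^{n}P^{m}M=P^{n+m}M=M_{n+m}$. Let $\nu$ be defined on $M$ as in Lemma~\ref{sec:lemv31}. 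By Theorem~\ref{sec:thv1}, $\nu$ is a valuation on $M$; and since $PM\neq M$ there is $t\in M\setminus PM$ with $\nu(t)=0$, while $\nu(0)=\infty$, so the value set of $\nu$ is not $\{\infty\}$, as Definition~\ref{sec:defv5} requires. (Equivalently, and closer to the form of the statement, one may first localize at $P$: the $PR_P$-adic filtrations make $R_P$ a filtered ring and $M_P$ a filtered $R_P$-module with $PM_P\neq M_P$, Theorem~\ref{sec:thv1} yields a valuation $\nu_P$ on $M_P$, and then $\nu(m):=\nu_P(m/1)$ is a valuation on $M$; the point needing care is axiom~(iv), where after writing the required element of $R_P$ as $a'/s$ with $s\notin P$ one uses that $s/1$ is a unit of $R_P$, hence preserves $\nu_P$.)

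($\Rightarrow$) Suppose $\nu\colon M\to\Delta$ is a valuation, and choose $z\in M$ with $\nu(z)\neq\infty$. By Proposition~\ref{sec:prov1}(vi) the core $\nu^{-1}(\infty)$ is a prime submodule of $M$, so $P:=(\nu^{-1}(\infty):M)$ is a prime ideal of $R$; it is the core of the induced valuation pair of Proposition~\ref{sec:prov1}(vii), and this is the prime I would use. Assume for contradiction that $PM_P=M_P$. Since $PM_P=(PM)_P$, the element $z/1$ lies in $(PM)_P$, so there is $s\in R\setminus P$ with $sz\in PM$. Every $c\in P$ satisfies $cM\subseteq\nu^{-1}(\infty)$, and $\nu^{-1}(\infty)$ is a submodule, so $PM\subseteq\nu^{-1}(\infty)$ and hence $\nu(sz)=\infty$. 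On the other hand $s\notin(\nu^{-1}(\infty):M)$, so by Definition~\ref{sec:defv5}(iv) there is $s'\in R$ with $\nu((s's)x)=\nu(x)$ for all $x\in M$; taking $x=z$ gives $\nu((s's)z)=\nu(z)\neq\infty$. But $(s's)z=s'(sz)\in\nu^{-1}(\infty)$ because that set is a submodule, so $\nu((s's)z)=\infty$, a contradiction. Therefore $PM_P\neq M_P$.

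The step I expect to be the main obstacle is the ``only if'' direction. The localization bookkeeping that produces an $s\in R\setminus P$ with $sz\in PM$ is routine; the real content is recognizing that the core $(\nu^{-1}(\infty):M)$ is the right prime, and that it is precisely axiom~(iv) of Definition~\ref{sec:defv5}, rather than the other three axioms, that converts this membership into a contradiction. On the ``if'' side the only things to check are that the $P$-adic powers satisfy Definitions~\ref{sec:defv1} and~\ref{sec:defv3} and that the hypothesis $PM_P\neq M_P$ is genuinely needed: it is exactly what guarantees $PM\neq M$ and so rules out the degenerate valuation whose value set is $\{\infty\}$.
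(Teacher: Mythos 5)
The paper does not actually prove this proposition --- its ``proof'' is a citation to Proposition~1.3 of \cite{6} --- so any complete argument you give is necessarily a different route. Your ``only if'' direction is correct and is essentially the standard argument: taking $P=(\nu^{-1}(\infty):M)$, which is prime by Proposition~\ref{sec:prov1}(vi), producing $s\notin P$ with $sz\in PM\subseteq\nu^{-1}(\infty)$ from the assumption $PM_P=M_P$, and then contradicting axiom~(iv) of Definition~\ref{sec:defv5}. That half can stand on its own.

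The ``if'' direction, however, has a genuine gap: the valuation you build from the $P$-adic filtration does not satisfy axiom~(iv), and Theorem~\ref{sec:thv1}, on which you lean, is itself unsound on exactly this point (the case analysis in the proof of Lemma~\ref{sec:lemv34} confuses ``$(a'a)x\in M_{i+k+k'}$'' with ``$\nu((a'a)x)=i+k+k'$''). Concretely, take $R=M=\mathbb{Z}$, $P=2\mathbb{Z}$, so $PM_P\neq M_P$ and your construction gives $\nu=v_2$, the $2$-adic valuation. Here $\nu^{-1}(\infty)=\{0\}$, so $(\nu^{-1}(\infty):M)=0$ and axiom~(iv) demands, for $a=2$, some $a'$ with $v_2(2a'x)=v_2(x)$ for all $x$; but $v_2(2a'x)\geq 1+v_2(x)$, so no such $a'$ exists. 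The same obstruction defeats your parenthetical variant through $R_P$ and $M_P$: for $a/s$ with $a\in P$ the element $a/s$ lies in $PR_P$ but not in the annihilator of $M_P$ modulo the core, and multiplying by anything in $R_P$ only pushes values deeper into the filtration. (Note that the paper's own Proposition~\ref{sec:prov11} is already signalling this tension: a filtration compatible with such a valuation is forced to be trivial.) A correct proof of the ``if'' direction cannot go through a $P$-adic filtration at all; it requires the Manis-style construction of \cite{6}, in which the value set $\Delta$ is built from a maximal valuation pair of $R_P$ dominating $(R_P,PR_P)$ rather than from powers of $P$.
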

\begin{proof}
see (Proposition 1.3 \cite{6})
\end{proof}

\begin{cro}\label{sec:crov1}
 Let $M$ be an filtered $R$\_module, where $R$ is a filtered ring. Then there exists a prime ideal $P$ of $R$ such that $PM_{P}\neq M_{P}$, where $M_{P}$ is the localization of $M$ at $P$.
\end{cro}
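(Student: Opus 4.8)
The plan is to obtain the desired prime ideal $P$ by chaining the two preceding results of the section: Theorem \ref{sec:thv1}, which manufactures a valuation out of the filtration data on $M$, and Proposition \ref{sec:prov2}, which converts the existence of \emph{any} valuation on $M$ into the existence of a prime $P$ of $R$ with $PM_P \neq M_P$. So the corollary is really just the composite ``filtered module $\Rightarrow$ valuation on $M$ $\Rightarrow$ prime $P$ with $PM_P\neq M_P$'', and the only real work is making sure each arrow applies.

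First I would invoke Theorem \ref{sec:thv1}: since $R$ is a filtered ring with filtration $\{R_n\}_{n>0}$ and $M$ is a filtered $R$-module with filtration $\{M_n\}_{n>0}$, the map $\nu : M \to \mathbb{Z}$ given by $\nu(t)=\min\{i\mid t\in M_i\setminus M_{i+1}\}$ is a valuation on $M$, with value set $\Delta=\nu(M)\subseteq \mathbb{Z}\cup\{\infty\}$ and maximum element $\infty$. In particular, $M$ carries a valuation in the sense of Definition \ref{sec:defv5}.

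Then I would apply the forward (``only if'') direction of Proposition \ref{sec:prov2}: because $M$ admits a valuation, there must be a prime ideal $P$ of $R$ with $PM_P\neq M_P$, where $M_P$ denotes the localization of $M$ at $P$. This is precisely the assertion of the corollary, so no further argument is needed once the two citations are in place.

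The step I expect to be the delicate one is verifying that $\nu$ genuinely qualifies as a valuation, specifically the condition $\Delta\neq\{\infty\}$ hidden in Definition \ref{sec:defv5}. This fails exactly when $M_i=M$ for every $i$, i.e.\ when the filtration on $M$ is trivial/non-separated, in which case $\nu\equiv\infty$ and Proposition \ref{sec:prov2} has nothing to act on. Hence the corollary implicitly requires the filtration on $M$ to be non-trivial (for instance $M_1\subsetneq M$, or separated), and in the write-up I would either add this as an explicit hypothesis or point out that it is part of the standing assumptions of the section; granting it, the chain Theorem \ref{sec:thv1} $\Rightarrow$ Proposition \ref{sec:prov2} closes the proof with no computation.
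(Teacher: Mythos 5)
Your proof is exactly the paper's own argument: cite Theorem \ref{sec:thv1} to produce the valuation $\nu$ and then apply Proposition \ref{sec:prov2} to obtain the prime $P$ with $PM_P\neq M_P$. Your added caveat that $\Delta\neq\{\infty\}$ forces the filtration on $M$ to be non-trivial is a legitimate gap in the paper's hypotheses that its one-line proof silently ignores, and flagging it only strengthens the write-up.
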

\begin{proof}
By theorem(\ref{sec:thv1}) there is an valuation on $M$, then by proposition(\ref{sec:prov2}) there exists a prime ideal $P$ of $R$ such that $PM_{P}\neq M_{P}$, where $M_{P}$ is the localization of $M$ at $P$.
\end{proof}
\begin{cro}\label{sec:crov3}
Let $M$ be an filtered $R$\_module, where $R$ is a filtered ring.
Then there is a skeleton on $M$.
\end{cro}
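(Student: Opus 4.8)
The plan is to derive this directly from the existence of the valuation produced in Theorem~\ref{sec:thv1}. First I would invoke Theorem~\ref{sec:thv1}: since $R$ is a filtered ring with filtration $\{R_n\}_{n>0}$ and $M$ a filtered $R$-module with filtration $\{M_n\}_{n>0}$, the map $\nu:M\to\mathbb{Z}$ given by $\nu(t)=\min\{i\mid t\in M_i\setminus M_{i+1}\}$ is a valuation on $M$. This reduces the corollary to a statement about an arbitrary valuation: every valuation admits a skeleton, and the filtered structure only enters through the construction of $\nu$.

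Next I would unwind Definition~\ref{sec:defv6}. A skeleton of $\nu$ is by definition a representation system of the equivalence relation $\sim_{\nu}$ associated with $\nu$ (two elements being $\sim_{\nu}$-related precisely when they have the same $\nu$-behaviour, e.g. $\nu(Rx)=\nu(Ry)$; note that Proposition~\ref{sec:prov1}(i) is exactly what makes such a relation well behaved with respect to the $R$-action). The only point to record here is that $\sim_{\nu}$ is genuinely an equivalence relation on $M$ — reflexivity, symmetry and transitivity are immediate — so that ``representation system'' is meaningful.

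Finally, I would observe that any equivalence relation on a set admits a representation system: passing to the quotient $M/\!\sim_{\nu}$ and choosing, via the axiom of choice, one element from each class yields a subset $S\subseteq M$ meeting every $\sim_{\nu}$-class in exactly one element. By Definition~\ref{sec:defv6} this $S$ is a skeleton of $\nu$, hence a skeleton on $M$, which is what is claimed. I do not expect any real obstacle: the substantive content was already carried by Lemmas~\ref{sec:lemv31}--\ref{sec:lemv34} and Theorem~\ref{sec:thv1}, and the present statement is just the combination of that theorem with the elementary set-theoretic fact that equivalence classes admit a system of representatives; the only care needed is to make the identification of $\sim_{\nu}$ explicit and to confirm it is an equivalence relation before selecting representatives.
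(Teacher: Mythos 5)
Your proposal is correct and follows essentially the same route as the paper: the paper's proof simply invokes Theorem~\ref{sec:thv1} to obtain the valuation $\nu$ and then appeals to Definition~\ref{sec:defv6} to conclude a skeleton exists. You merely make explicit the elementary step the paper leaves implicit (that the equivalence relation $\sim_{\nu}$, which the paper never actually defines, admits a system of representatives by the axiom of choice), which is a reasonable and harmless elaboration.
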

\begin{proof}
By theorem(\ref{sec:thv1}) there is a valuation on $M$,then by
definition(\ref{sec:defv6}) we have there is a skeleton on $M$.
\end{proof}
\begin{pro}\label{prov3}
Let $M$ be an filtered $R$\_module where $R$ is a filtered  ring,
and $\nu$ a valuation on $M$. If $\Lambda$ is a skeleton of
$\nu$, then the following conditions are satisfied:
\begin{enumerate}
\item[i)] $\Lambda$ is a $\nu$-independent subset of $M$;
\item[ii)] For every $x\in M\setminus\nu^{-1}(\infty)$, there exists a unique $\lambda\in \Lambda$ such that \\ $\nu(x)=\nu(R\lambda)$.
\end{enumerate}
\end{pro}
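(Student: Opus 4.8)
Recall that two elements $x,y\in M\setminus\nu^{-1}(\infty)$ satisfy $x\sim_\nu y$ precisely when $\nu(Rx)=\nu(Ry)$. The engine of the whole argument is the observation that for such $x,y$ this is already equivalent to the \emph{a priori} weaker condition $\nu(x)\in\nu(Ry)$, and I would establish this first. One direction is trivial, since $\nu(x)=\nu(1\cdot x)\in\nu(Rx)$. For the other, assume $\nu(x)=\nu(by)$ for some $b\in R$. Applying Proposition~\ref{sec:prov1}(i) to this equality gives $\nu(ax)=\nu((ab)y)$ for every $a\in R$, hence $\nu(Rx)\subseteq\nu(Ry)$. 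For the reverse inclusion, note first that $b\notin(\nu^{-1}(\infty):M)$ (otherwise $by\in\nu^{-1}(\infty)$ and $\nu(x)=\nu(by)=\infty$, contradicting $x\notin\nu^{-1}(\infty)$), so Definition~\ref{sec:defv5}(iv) supplies $b'\in R$ with $\nu((b'b)z)=\nu(z)$ for all $z\in M$. Then for any $c\in R$, using Proposition~\ref{sec:prov1}(i) first on $\nu((b'b)y)=\nu(y)$ and then on $\nu(x)=\nu(by)$,
\[
\nu(cy)=\nu\big(c(b'b)y\big)=\nu\big((cb')(by)\big)=\nu\big((cb')x\big)\in\nu(Rx),
\]
so $\nu(Ry)\subseteq\nu(Rx)$ and the equivalence follows. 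This incidentally shows $\sim_\nu$ is a genuine equivalence relation on $M\setminus\nu^{-1}(\infty)$ (reflexivity uses $1\in R$; symmetry and transitivity are clear in the form $\nu(Rx)=\nu(Ry)$), so a representation system $\Lambda$ of it is a well-defined subset of $M\setminus\nu^{-1}(\infty)$.

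Granting this lemma, part (i) is immediate. Since $\Lambda\subseteq M\setminus\nu^{-1}(\infty)$ we get $\Lambda\cap\nu^{-1}(\infty)=\phi$. If $\lambda,\mu\in\Lambda$ are distinct then, $\Lambda$ meeting each $\sim_\nu$-class in exactly one point, $\lambda\not\sim_\nu\mu$, i.e.\ $\nu(R\lambda)\neq\nu(R\mu)$. Were $\nu(\lambda)\in\nu(R\mu)$, the lemma (legitimate since $\lambda\notin\nu^{-1}(\infty)$) would force $\nu(R\lambda)=\nu(R\mu)$, a contradiction; hence $\nu(\lambda)\notin\nu(R\mu)$, and exchanging the roles of $\lambda$ and $\mu$ gives $\nu(\mu)\notin\nu(R\lambda)$ as well. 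Thus the condition of Definition~\ref{sec:defv7} holds for every pair of distinct elements of $\Lambda$, so $\Lambda$ is $\nu$-independent.

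For part (ii), fix $x\in M\setminus\nu^{-1}(\infty)$. It lies in exactly one $\sim_\nu$-class, which $\Lambda$ meets in a single element $\lambda$; then $x\sim_\nu\lambda$, i.e.\ $\nu(Rx)=\nu(R\lambda)$, and in particular $\nu(x)\in\nu(R\lambda)$ (which is how the displayed relation $\nu(x)=\nu(R\lambda)$ of the statement is to be read). For uniqueness, if some $\lambda'\in\Lambda$ also satisfies $\nu(x)\in\nu(R\lambda')$, the lemma applied to the pair $x,\lambda'$ yields $\nu(Rx)=\nu(R\lambda')$, whence $\nu(R\lambda)=\nu(R\lambda')$, so $\lambda\sim_\nu\lambda'$ and therefore $\lambda=\lambda'$.

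The one genuinely substantive step is the lemma in the first paragraph: the nontrivial inclusion $\nu(Ry)\subseteq\nu(Rx)$ is exactly where the fourth valuation axiom, Definition~\ref{sec:defv5}(iv), enters, and one must check that the element $b$ witnessing $\nu(x)\in\nu(Ry)$ lies outside $(\nu^{-1}(\infty):M)$ before invoking it. After that, both assertions are formal consequences of $\Lambda$ being a representation system of an equivalence relation living on $M\setminus\nu^{-1}(\infty)$, and the recurring appeals to Proposition~\ref{sec:prov1}(i) are routine.
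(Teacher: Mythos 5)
Your proof is correct, but it is genuinely different from what the paper does: the paper's entire ``proof'' of this proposition is a citation to Proposition~1.4 of the reference \cite{6} (Guangxing, \emph{Valuations on a Module}), with no argument supplied. You instead give a self-contained derivation, and in doing so you fill a real gap in the paper's exposition: the relation $\sim_\nu$ is invoked in Definition~\ref{sec:defv6} but never actually defined in the paper, so you had to supply it ($x\sim_\nu y$ iff $\nu(Rx)=\nu(Ry)$ on $M\setminus\nu^{-1}(\infty)$) and then prove the key equivalence with the weaker condition $\nu(x)\in\nu(Ry)$. That lemma is the right engine: the forward inclusion via Proposition~\ref{sec:prov1}(i), the check that $b\notin(\nu^{-1}(\infty):M)$, and the use of Definition~\ref{sec:defv5}(iv) to get the reverse inclusion are all correct, and parts (i) and (ii) then follow formally from $\Lambda$ being a representation system. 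Your reading of the statement's ``$\nu(x)=\nu(R\lambda)$'' as ``$\nu(x)\in\nu(R\lambda)$'' is also the right correction; the source reference states it with membership. What the paper's approach buys is brevity and deference to the original source; what yours buys is a complete argument inside the paper, plus the observation (which you could make explicit) that the filtration hypotheses on $R$ and $M$ play no role whatsoever here --- the proposition holds for any valuation on any module over a ring with unit.
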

\begin{proof}
By corollary(\ref{sec:crov3}) $\Lambda$ is a skeleton of $\nu$
and by proposition(1.4, \cite{6}) we have the above conditions.
\end{proof}
\begin{pro}\label{sec;prov4}
Let $M$ be an filtered $R$\_module where $R$ is a filtered  ring,
and $\nu$ a valuation on $M$. If $\Lambda$ is a skeleton of
$\nu$. If $a_{1}\lambda_{1}+\cdots+a_{n}\lambda_{n}=0$ where
$a_{1},\cdots a_{n}\in R$ and
$\lambda_{1}\cdots\lambda_{n}\in\Lambda$ are mutually distinct,
then \\ $a_{i}\in(\nu^{-1}(\infty):M)$,$i=1,\cdots ,n$.
\end{pro}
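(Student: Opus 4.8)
The plan is to prove first that every product $a_i\lambda_i$ lies in the core $\nu^{-1}(\infty)$, and then to upgrade this to $a_i\in(\nu^{-1}(\infty):M)$. I will use freely Proposition \ref{sec:prov1}, which applies to the given valuation $\nu$, together with Proposition \ref{prov3}. Two preliminary remarks: since $\nu^{-1}(\infty)$ is a submodule (Proposition \ref{sec:prov1}(vi)) it contains $0$, so $\nu(0)=\infty$ and $\nu(0\cdot x)=\infty$ for all $x\in M$; and since $\Lambda$ is $\nu$-independent (Proposition \ref{prov3}(i)) we have $\Lambda\cap\nu^{-1}(\infty)=\emptyset$, so $\nu(\lambda_i)\ne\infty$ for every $i$.

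The crucial point is a separation property of skeleton elements: \emph{if $\lambda_i\ne\lambda_j$ in $\Lambda$ and $a,b\in R$, then $\nu(a\lambda_i)=\nu(b\lambda_j)$ is impossible unless this common value equals $\infty$.} Indeed, suppose $\nu(a\lambda_i)=\nu(b\lambda_j)=c$ with $c\ne\infty$, and put $w=a\lambda_i$. Then $w\in M\setminus\nu^{-1}(\infty)$, and the value $c=\nu(w)$ belongs both to $\nu(R\lambda_i)$ (witnessed by the coefficient $a$) and to $\nu(R\lambda_j)$ (witnessed by the coefficient $b$). Reading Proposition \ref{prov3}(ii) as asserting the existence of a \emph{unique} $\lambda\in\Lambda$ with $\nu(w)\in\nu(R\lambda)$, and applying it to $w$, we conclude $\lambda_i=\lambda_j$, contrary to hypothesis. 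Consequently the values $\nu(a_i\lambda_i)$ that are different from $\infty$ (among $i=1,\dots,n$) are pairwise distinct.

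Now argue by contradiction: assume some $\nu(a_i\lambda_i)\ne\infty$, and set $m=\min_{1\le i\le n}\nu(a_i\lambda_i)$, so $m\ne\infty$. By the separation property the minimum is attained at a single index, which after renumbering we take to be $1$; thus $\nu(a_1\lambda_1)=m$, while $\nu(a_j\lambda_j)>m$ for every $j\ge2$ (each such value is either $\infty$ or finite and distinct from $m$, hence strictly larger). From the relation we get $a_1\lambda_1=-(a_2\lambda_2+\cdots+a_n\lambda_n)$, so by Proposition \ref{sec:prov1}(ii) and repeated use of the inequality $\nu(x+y)\ge\min\{\nu(x),\nu(y)\}$ (Definition \ref{sec:defv5}(i)),
\[
m=\nu(a_1\lambda_1)=\nu(a_2\lambda_2+\cdots+a_n\lambda_n)\ge\min_{2\le j\le n}\nu(a_j\lambda_j)>m,
\]
which is absurd. (If $n=1$ the relation reads $a_1\lambda_1=0$, forcing $\nu(a_1\lambda_1)=\nu(0)=\infty$ directly.) Hence $\nu(a_i\lambda_i)=\infty$ for every $i$.

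It remains to deduce $a_i\in(\nu^{-1}(\infty):M)$ from $\nu(a_i\lambda_i)=\infty$. Since $\lambda_i\in M\setminus\nu^{-1}(\infty)$ and $\nu(a_i\lambda_i)=\infty=\nu(0\cdot\lambda_i)$, Proposition \ref{sec:prov1}(iv), applied with the two ring elements $a_i$ and $0$ and with $z=\lambda_i$, gives $\nu(a_ix)=\nu(0\cdot x)=\infty$ for every $x\in M$; that is, $a_iM\subseteq\nu^{-1}(\infty)$, i.e. $a_i\in(\nu^{-1}(\infty):M)$, for each $i=1,\dots,n$. I expect the separation property of the second paragraph — and in particular pinning down the precise meaning of the uniqueness clause in Proposition \ref{prov3}(ii), on which it rests — to be the only real obstacle; once that is in hand, the rest is the standard ultrametric argument for $\nu$-independence modulo the core.
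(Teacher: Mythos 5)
Your argument is correct, but it is genuinely different from what the paper does: the paper's ``proof'' of this proposition is only a citation (it invokes Corollary \ref{sec:crov3} to say $\Lambda$ is a skeleton and then refers to Proposition 1.5 of \cite{6} for the whole statement), whereas you give a self-contained derivation from Proposition \ref{sec:prov1} and Proposition \ref{prov3}. Your two steps are exactly the right ones: the separation property (two distinct skeleton elements cannot produce the same finite value, by the uniqueness clause of Proposition \ref{prov3}(ii), read as $\nu(x)\in\nu(R\lambda)$ --- which is indeed the intended meaning, the equality sign in the paper's statement being a typo for membership) forces the finite values $\nu(a_i\lambda_i)$ to be pairwise distinct, the ultrametric inequality then kills a unique finite minimum, and Proposition \ref{sec:prov1}(iv) with $b=0$ and $z=\lambda_i$ upgrades $\nu(a_i\lambda_i)=\infty$ to $a_iM\subseteq\nu^{-1}(\infty)$. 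What your approach buys is an actual proof inside the paper's own framework, valid for any valuation with a skeleton; what the paper's approach buys is brevity at the cost of transparency, since the reader must consult \cite{6} to see the very argument you reconstructed. The only point to make explicit if you write this up is the one you already flagged: the justification that $\ge m$ together with $\ne m$ yields $>m$ uses that the relevant values are totally ordered, which holds here since the value set is $\mathbb{Z}\cup\{\infty\}$.
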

\begin{proof}
By corollary(\ref{sec:crov3}) $\Lambda$ is a skeleton of $\nu$
and by proposition(1.5, \cite{6}) we have If
$a_{1}\lambda_{1}+\cdots+a_{n}\lambda_{n}=0$ where $a_{1},\cdots
a_{n}\in R$ and $\lambda_{1}\cdots\lambda_{n}\in\Lambda$ are
mutually distinct, then
$a_{i}\in(\nu^{-1}(\infty):M)$,$i=1,\cdots ,n$.
\end{proof}

\end{document}